\PassOptionsToPackage{dvipsnames,svgnames}{xcolor}
\documentclass[ecta,nameyear]{econsocart}

\RequirePackage[colorlinks,citecolor=blue,linkcolor=blue,urlcolor=blue]{hyperref}

\startlocaldefs

\usepackage{amsmath, amssymb, amsthm, amsfonts}
\usepackage{mathtools}
\providecommand{\coloneq}{\coloneqq}
\usepackage[mathscr]{euscript}
\usepackage{graphicx}
\usepackage{tikz}
\usepackage{verbatim}
\usepackage{xcolor}
\usepackage{mathrsfs}
\usepackage{bbm}
\usepackage{bm}

\usepackage{enumitem}
\setlist[enumerate]{itemsep=2pt,topsep=3pt}
\setlist[itemize]{itemsep=2pt,topsep=3pt}
\setlist[enumerate,1]{label={\upshape (\roman*)}}

\renewcommand{\leq}{\leqslant}
\renewcommand{\geq}{\geqslant}

\providecommand{\inner}[1]{\left\langle{#1}\right\rangle}

\newcommand{\setntn}[2]{ \{ #1 : #2 \} }

\newcommand{\preqsd}{\preceq}

\newcommand{\1}{\mathbbm 1}

\newcommand{\given}{\, | \,}

\newcommand*\diff{\mathop{}\!\mathrm{d}}

\let\emptyset\varnothing

\newcommand{\bB}{\mathcal B}

\newcommand{\mM}{\mathcal M}

\newcommand{\pP}{\mathcal P}

\newcommand{\RR}{\mathbbm R}

\newcommand{\NN}{\mathbbm N}

\newcommand{\PP}{\mathbbm P}

\newcommand{\EE}{\mathbbm E}

\newcommand{\Xsf}{\mathsf X}

\renewcommand{\phi}{\varphi}
\renewcommand{\epsilon}{\varepsilon}

\usepackage[capitalize,nameinlink]{cleveref}
\crefname{figure}{Figure}{Figures}
\usepackage{thmtools}

\declaretheorem[style=plain, name=Theorem]{theorem}

\declaretheorem[style=plain, name=Lemma, sibling=theorem]{lemma}
\declaretheorem[style=plain, name=Proposition, sibling=theorem]{proposition}

\newcommand{\navy}[1]{\textcolor{MidnightBlue}{\emph{#1}}}

\endlocaldefs

\begin{document}

\begin{frontmatter}

\title{Monotone Mixing and Distribution Dynamics}
\runtitle{Monotone Mixing and Distribution Dynamics}

\begin{aug}
\author[add1]{\fnms{Takashi}~\snm{Kamihigashi}\ead[label=e1]{tkamihig@rieb.kobe-u.ac.jp}}
\author[add2]{\fnms{Qingyin}~\snm{Ma}\ead[label=e2]{qingyin.ma@outlook.com}}
\author[add3]{\fnms{John}~\snm{Stachurski}\ead[label=e3]{j-stachurski@grips.ac.jp}}

\address[add1]{%
\orgdiv{Center for Computational Social Science},
\orgname{Kobe University}}

\address[add2]{%
\orgdiv{International School of Economics and Management},
\orgname{Capital University of Economics and Business}}

\address[add3]{%
\orgname{National Graduate Institute for Policy Studies}}
\end{aug}

\begin{abstract}
    Many economic applications establish stability of Markov dynamics using the
    monotone mixing condition (MMC) of Hopenhayn and Prescott (1992). Working in
    the same setting as that paper, we introduce a weak monotone mixing
    condition (WMMC), phrased in terms of the reversal of rank between
    populations, and show that it is strictly weaker than the MMC and both
    necessary and sufficient for global stability under the Kolmogorov metric.
     We apply these results to a
    small open economy version of the Aiyagari model, showing how the WMMC can
    be used to establish global stability in a setting where the MMC fails. In
    addition, we show that, when the state space is one-dimensional, the MMC and
    WMMC coincide, implying that the original MMC is necessary as well as
    sufficient in this setting.
\end{abstract}

\begin{keyword}
\kwd{Markov processes}
\kwd{monotone mixing}
\kwd{distribution dynamics}
\end{keyword}

\begin{keyword}[class=JEL]
\kwd{C61}
\kwd{C62}
\kwd{E21}
\end{keyword}

\end{frontmatter}

\section{Introduction}

Stochastic stability plays a major role in economic theory. 
\cite{hopenhayn1992stochastic} contributed to this theory
by analyzing stability in a Markov setting where transition dynamics are monotone.
They showed that, in their
setting, global stability is implied by a simple mixing condition.  This
monotone mixing condition (MMC) has become a standard approach to establishing
stability properties over a large range of applications, including international
trade, human capital, business cycles, inequality, intergenerational mobility,
and labor markets.\footnote{See, e.g., \cite{marcet2007incomplete},
\cite{antunes2007startup}, \cite{morand2007stationary},
\cite{hidalgo2009endogenous}, \cite{samaniego2008technical},
\cite{le2022managing}, \cite{light2022mean}, \cite{balbus2025markov}, and
\cite{kam2025inflation}.  In the context of wealth and income dynamics,
\cite{riosrull1998computing} calls the MMC ``the American Dream and the
American Nightmare'' condition, since it requires that the poorest agents have
positive probability of becoming rich and the richest have positive probability
of becoming poor.}

In this note we extend \cite{hopenhayn1992stochastic}. Working in the same environment, we
introduce a weak monotone mixing condition (WMMC), stated in terms of 
partial rank reversal between populations evolving from different initial
conditions. We
then show that this weaker condition is both necessary and sufficient for global
stability, and that it delivers an explicit
geometric rate of convergence. In addition, we show that the WMMC
is strictly weaker than the MMC, and that this matters for economic applications
with multiple state variables.  As a further contribution, we show that the
WMMC and the MMC agree when the state is one-dimensional.  This implies that the
original MMC is in fact necessary as well as sufficient in such settings.

In terms of techniques, one factor driving our results is that we use the
Kolmogorov metric to define global stability, rather than the notion of weak
convergence applied by \cite{hopenhayn1992stochastic}.  The key benefit of the
Kolmogorov metric is that it respects the underlying order on the state space.
Convergence in the Kolmogorov metric is stronger than weak convergence in the setting of
\cite{hopenhayn1992stochastic}, which assists our necessity results and makes our
sufficiency results stronger. The Kolmogorov metric is often used by
econometricians, since it underpins Kolmogorov--Smirnov tests.

\cite{hopenhayn1992stochastic} built on earlier work by
\cite{razin1979stochastic}, \cite{bhattacharya1988asymptotics}, and
\cite{stokeyec}. Numerous authors have since considered global stability in
monotone environments similar to those studied by
\cite{hopenhayn1992stochastic}, often with the goal of weakening assumptions on
the state process or the state space.  (See, for example,
\cite{kamihigashi2014stochastic}, \cite{foss2018stochastic},
\cite{kamihigashi2019unified}, \cite{light2022mean}, \cite{light2026invariant},
or \cite{foss2026compressibility}.)  To the best of our knowledge, this paper is the
first to obtain an exact characterization of stability (necessity and sufficiency) 
in terms of monotone mixing.

\section{Setup}\label{s:setup}

In this section we introduce the key components of our analysis.
Our setting is that of \cite{hopenhayn1992stochastic}.
In particular, the state space $\Xsf$ is a compact
metric space with Borel sets $\bB$, endowed with a closed partial order $\preceq$,
 and $\Xsf$ has both a least element $a$ and a greatest element $b$.

Given $c,d \in \Xsf$ we write $[c,d]$ for the set of $x\in \Xsf$ with $c \preceq x
\preceq d$.
We write $b\Xsf$ for the bounded Borel measurable functions from $\Xsf$ to
$\RR$, $bc\Xsf$ for the continuous functions in $b\Xsf$, and $\pP$ for the Borel
probability measures on $\Xsf$, calling elements of $\pP$ \navy{distributions}.
A function $h \in b\Xsf$ is called \navy{increasing} if $x \preceq x'$ implies $h(x)
\leq h(x')$, and $B \in \bB$ is called \navy{increasing} if $\1_B$ is increasing.
We let $ib\Xsf$ denote the increasing functions in $b\Xsf$ and $i\bB$ the
increasing sets in $\bB$.  For $\mu \in \pP$ and $h \in b\Xsf$ we set
$\inner{\mu, h} \coloneq \int h \diff \mu$. Also, $\delta_x$ is the
probability measure concentrated at $x$.

Let $\mM$ denote the finite Borel measures on $\Xsf$.  Given $\mu, \nu \in
\mM$, we write $\mu \preqsd \nu$, and say that $\mu$ is \navy{stochastically
dominated} by $\nu$, if $\mu(\Xsf) = \nu(\Xsf)$ and $\mu(I) \leq \nu(I)$ for
all $I \in i\bB$.  On $\pP$ this is the standard notion, equivalent to
$\inner{\mu, h} \leq \inner{\nu, h}$ for all $h \in ib\Xsf$.
Moreover, $\preqsd$ is a partial order on
$\pP$ \citep[Theorem~2]{kamae1978stochastic}, and $x \preceq x'$ implies
$\delta_x \preqsd \delta_{x'}$.  We also use the pointwise order on $\mM$,
writing $\mu' \leq \mu$ when $\mu'(B) \leq \mu(B)$ for all $B \in \bB$ and
calling $\mu'$ a \navy{subpopulation} of $\mu$.

A \navy{stochastic kernel} on $(\Xsf, \bB)$ is a map $P \colon \Xsf \times \bB
\to [0,1]$ such that $x \mapsto P(x, B)$ is Borel measurable for each $B \in
\bB$, while $B \mapsto P(x, B)$ is a probability measure for each $x \in \Xsf$.
We write $P_x \coloneq P(x, \cdot)$ and set
\begin{equation*}
    (\mu P)(B) \coloneq \int P(x, B) \, \mu(\diff x)
    \quad \text{and} \quad
    (Ph)(x) \coloneq \int h(x') \, P(x, \diff x') ,
\end{equation*}
so that $\mu \mapsto \mu P$ maps $\pP$ into itself while $h \mapsto Ph$ maps
$b\Xsf$ into itself.  We write $P^t$ for the $t$-th iterate of $P$, with $P^0_x
\coloneq \delta_x$.
A kernel $P$ is called \navy{increasing} if $\mu \preqsd \nu$ implies $\mu P
\preqsd \nu P$ or, equivalently, if $P_x \preqsd P_{x'}$ whenever $x \preceq
x'$ \citep[Proposition~1]{kamae1977stochastic}.  Evidently
$P^t$ is increasing for every $t$ whenever $P$ is.

A distribution $\mu^* \in \pP$ is called \navy{stationary} for $P$ if $\mu^* =
\mu^* P$.  We call $P$ \navy{globally stable} if $P$ has a unique stationary
distribution $\mu^* \in \pP$ and, in addition, $\kappa(\mu P^t, \mu^*) \to 0$ as
$t \to \infty$ for every $\mu \in \pP$, where $\kappa$ is the 
\navy{Kolmogorov metric}
\begin{equation}\label{eq:kol}
    \kappa(\mu, \nu)
    \coloneq 2 \sup_{I \in i\bB} |\mu(I) - \nu(I)|
    \qquad (\mu, \nu \in \pP) .
\end{equation}
Since $\Xsf$ is compact, convergence in $\kappa$ implies the weak convergence
used in \cite{hopenhayn1992stochastic}; see, e.g., \cite{kamihigashi2019unified}.

\section{Main Result}\label{s:main}

Throughout this section and the next, $P$ is an increasing stochastic kernel
on $(\Xsf, \bB)$.  We introduce a weak monotone mixing condition and show
that it characterizes global stability.  Major proofs are deferred to
\cref{s:proofs}.

To begin, fix $\mu$ and $\nu$ in $\pP$. For the purpose of this discussion, it will be helpful to regard $\mu$ and $\nu$
as two populations of equal size, each distributed over the state space
$\Xsf$. A point of $\Xsf$ is a possibly multidimensional list of attributes of
an individual and $\preceq$ ranks attribute bundles.  We say that $\mu$ is
\navy{partially dominated} by $\nu$ if there exist $\epsilon > 0$ and $\mu'$,
$\nu'$ in $\mM$ with
\begin{equation}\label{eq:pd}
    \mu' \leq \mu,
    \qquad
    \nu' \leq \nu,
    \qquad
    \mu'(\Xsf) = \nu'(\Xsf) = \epsilon
    \qquad \text{and} \qquad
    \mu' \preqsd \nu' .
\end{equation}
In words, a fraction $\epsilon$ of the population $\mu$ can be matched with an
equally large fraction of $\nu$ in such a way that the first ranks weakly below
the second.  When \eqref{eq:pd} holds for a given $\epsilon$, we say that $\mu$
is partially dominated by $\nu$ \navy{at level} $\epsilon$.\footnote{A pair
    $(\mu', \nu')$ satisfying \eqref{eq:pd} is called an ordered component pair
    in \cite{kamihigashi2019unified}. The partial dominance terminology comes from
\cite{kamihigashi2020partial}.}

\cref{f:pd} illustrates the definition when $\Xsf \subset \RR$ and $\mu$ and
$\nu$ have continuous CDFs $F_\mu$ and $F_\nu$.  Here $F_\mu \leq F_\nu$, so
$\nu \preqsd \mu$.  Nonetheless, $\mu$ is partially dominated by $\nu$ at level
$\epsilon$.  To see this, choose $x_\mu$ and $x_\nu$ with $F_\mu(x_\mu) =
\epsilon$ and $F_\nu(x_\nu) = 1 - \epsilon$, and let $\mu'$ and $\nu'$ be the
restrictions of $\mu$ to $[a, x_\mu]$ and of $\nu$ to $[x_\nu, b]$.  Each has
mass $\epsilon$.  In the figure $x_\mu \leq x_\nu$, so all of the mass of $\mu'$ lies
weakly below all of the mass of $\nu'$, and hence $\mu' \preqsd \nu'$.  

\begin{figure}[t]
    \centering
    \begin{tikzpicture}[>=stealth]
        \draw[->] (-0.2,0) -- (6.4,0);
        \draw[->] (0,-0.2) -- (0,2.9);
        \draw[dashed] (0,2.5) -- (6,2.5);
        \node[left] at (0,2.5) {$1$};
        \draw[dashed] (0,0.5) -- (2.768,0.5) -- (2.768,0);
        \draw[dashed] (0,2.0) -- (3.232,2.0) -- (3.232,0);
        \node[left] at (0,0.5) {$\epsilon$};
        \node[left] at (0,2.0) {$1-\epsilon$};
        \draw[line width=2.5pt, MidnightBlue!60] (0,-0.1) -- (2.768,-0.1);
        \draw[line width=2.5pt, OrangeRed!60] (3.232,-0.1) -- (6,-0.1);
        \draw[thick, OrangeRed]
            plot[domain=0:6, samples=120] (\x, {2.5/(1+exp(-(\x-2.4)/0.6))});
        \draw[thick, MidnightBlue]
            plot[domain=0:6, samples=120] (\x, {2.5/(1+exp(-(\x-3.6)/0.6))});
        \node[OrangeRed, left] at (2.35,1.25) {$F_\nu$};
        \node[MidnightBlue, right] at (3.65,1.25) {$F_\mu$};
        \node[below] at (0,-0.15) {$a$};
        \node[below] at (6,-0.15) {$b$};
        \node[below] at (2.68,-0.15) {$x_\mu$};
        \node[below] at (3.32,-0.15) {$x_\nu$};
    \end{tikzpicture}
    \caption{Partial dominance of $\mu$ by $\nu$ at level $\epsilon$ when
    $\Xsf \subset \RR$.  The lowest fraction $\epsilon$ of $\mu$, supported
    on $[a, x_\mu]$, ranks below the highest fraction $\epsilon$ of $\nu$,
    supported on $[x_\nu, b]$.}
    \label{f:pd}
\end{figure}

We say that $P$ satisfies the \navy{weak monotone mixing condition} (WMMC) if
there exists an $n \in \NN$ such that $P^n_b$ is partially dominated by $P^n_a$.
To interpret this definition, observe that, since $a \preceq b$ and $P$ is
increasing, we have $P^n_a \preqsd P^n_b$ at every $n$. This means that the
descendants of the ``high class'' initial condition $b$ stochastically dominate
the descendants of the ``low class'' initial condition $a$ at every horizon.  At
the same time, this statement is aggregate, at the level of
distributions, and does not rule out reversal at the level of
individuals.  The WMMC says that such a reversal occurs at some horizon, for a
positive fraction of each population.

We remark that, for the WMMC to hold, the reversal need only be weak:  it suffices that
the low-class descendants catch up with the high-class descendants, and no part of
either population need strictly overtake the other. The name WMMC reflects the
fact that the condition is strictly weaker than the monotone mixing condition of
\cite{hopenhayn1992stochastic}. See \cref{s:mmc} for details.

We can now state our main theorem.

\begin{theorem}\label{t:main}
    Let $P$ be an increasing stochastic kernel on $(\Xsf, \bB)$.  Then $P$ is
    globally stable if and only if $P$ satisfies the weak monotone mixing
    condition.  In this case, if $n \in \NN$ and $\epsilon > 0$ are such that
    $P^n_b$ is partially dominated by $P^n_a$ at level $\epsilon$, then the
    stationary distribution $\mu^*$ obeys
    \begin{equation}\label{eq:rate}
        \kappa(\mu P^t, \mu^*)
        \leq 4 (1 - \epsilon)^{\lfloor t/n \rfloor}
        \qquad \text{ for every $\mu \in \pP$ and every $t \geq 0$}.
    \end{equation}
\end{theorem}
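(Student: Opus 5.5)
The plan is to prove sufficiency by a one-step contraction argument on the "gap" between the extremal orbits $P^t_a$ and $P^t_b$. Necessity will come from a duality characterization of partial dominance.

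\textbf{Sufficiency.} Set $d_t \coloneq \sup_{I \in i\bB} \bigl(P^t_b(I) - P^t_a(I)\bigr)$, which lies in $[0,1]$. Since $a \preceq x \preceq b$ and $P$ is increasing, $P^t_a \preqsd P^t_x \preqsd P^t_b$ for every $x$. Integrating gives $P^t_a \preqsd \mu P^t \preqsd P^t_b$ for every $\mu \in \pP$. Hence $|\mu P^t(I) - \nu P^t(I)| \leq d_t$ for all increasing $I$, and so $\kappa(\mu P^t, \nu P^t) \leq 2 d_t$.

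Now suppose $P^n_b$ is partially dominated by $P^n_a$ at level $\epsilon$. Write $P^n_b = \mu' + \rho_b$ and $P^n_a = \nu' + \rho_a$, where $\mu' \preqsd \nu'$ and $\rho_a, \rho_b \geq 0$ have mass $1-\epsilon$. Fix $s \geq 0$ and $I \in i\bB$. Because $P^s$ is increasing and $\mu' \preqsd \nu'$, we have $\mu' P^s(I) \leq \nu' P^s(I)$. By the sandwich above, $\rho_b P^s(I) \leq (1-\epsilon) P^s_b(I)$ and $\rho_a P^s(I) \geq (1-\epsilon) P^s_a(I)$. Subtracting yields $d_{n+s} \leq (1-\epsilon) d_s$, and iterating gives $d_t \leq (1-\epsilon)^{\lfloor t/n \rfloor}$.

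\textbf{Existence and uniqueness of $\mu^*$.} Since $\delta_a \preqsd P_a$ and $P$ is increasing, the sequence $(P^t_a)$ is increasing in $\preqsd$ and $(P^t_b)$ is decreasing. By compactness of $\Xsf$, take a weak limit point $\mu^*$ of $(P^t_a)$. Closedness of $\preqsd$ under weak limits (from closedness of $\preceq$, via Kamae et al.) gives $P^t_a \preqsd \mu^* \preqsd P^t_b$ for all $t$. Applying $P$, we get $P^{t+1}_a \preqsd \mu^* P \preqsd P^{t+1}_b$. Therefore $\kappa(\mu^* P, \mu^*) \leq 2(d_t + d_{t+1}) \to 0$, so $\mu^* = \mu^* P$ because $\kappa$ is a metric on $\pP$. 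Finally, $\kappa(\mu P^t, \mu^*) = \kappa(\mu P^t, \mu^* P^t) \leq 2 d_t$, which gives \eqref{eq:rate} with room to spare. Uniqueness follows immediately from this bound.

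\textbf{Necessity.} Suppose $P$ is globally stable. Then $\kappa(P^n_a, \mu^*) \to 0$ and $\kappa(P^n_b, \mu^*) \to 0$, so $\sup_{I \in i\bB} \bigl(P^n_b(I) - P^n_a(I)\bigr) \to 0$. The key step, and the main obstacle, is a Strassen-type duality for ordered component pairs. Namely: if $\sup_{I\in i\bB}(\mu(I) - \nu(I)) \leq \delta$, then $\mu$ is partially dominated by $\nu$ at level $1-\delta$. Equivalently, the maximal mass of an ordered component pair $(\mu',\nu')$ with $\mu' \leq \mu$, $\nu' \leq \nu$, $\mu' \preqsd \nu'$ equals $1 - \sup_I(\mu(I)-\nu(I))$.

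I would first try to import this from the ordered-component-pair results of \cite{kamihigashi2019unified}. Failing that, I would prove it via Strassen's theorem applied to a Kantorovich-type transport problem. One minimizes the mass not transported along the closed set $\{(x,y) : x \preceq y\}$ and uses compactness to attain the optimum. The dual of this problem is a supremum over increasing (closed/open) sets, and the sup over Borel increasing sets dominates it. With this in hand, choosing $n$ such that the sup is below $1/2$ gives partial dominance of $P^n_b$ by $P^n_a$ at level at least $1/2$, which is the WMMC.
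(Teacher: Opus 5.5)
Your proof is correct. The necessity half matches the paper's. Both reduce to the dual formula \eqref{eq:dual} (Theorem~3.1 of \cite{kamihigashi2019unified}), which is your first option, and your Strassen fallback targets the correct, nontrivial half of that duality.

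Sufficiency is where you take a genuinely different route. The paper uses \eqref{eq:amon} to upgrade $\alpha(P^n_b,P^n_a)>0$ to the uniform bound $\inf_{x,x'}\alpha(P^n_x,P^n_{x'})\geq\alpha_n$. It then invokes Corollary~5.1 of \cite{kamihigashi2019unified}, which supplies existence, uniqueness and the $\gamma$-contraction \eqref{eq:grate}. The constant $4$ comes from $\kappa\leq 2\gamma$ together with $\gamma\leq\kappa\leq 2$.

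You never leave the two extremal orbits:
\begin{itemize}
\item The sandwich $P^t_a\preqsd\mu P^t\preqsd P^t_b$ reduces everything to the gap $d_t$.
\item Splitting $P^n_b$ and $P^n_a$ into an ordered component pair plus remainders gives $d_{n+s}\leq(1-\epsilon)d_s$ directly.
\item Existence follows from weak compactness and weak closedness of $\preqsd$, with $\mu^*$ trapped between $P^t_a$ and $P^t_b$.
\end{itemize}
I checked each of these steps, including the bound on $\kappa(\mu^*P,\mu^*)$, which in fact holds with $2d_{t+1}$.

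What your route buys:
\begin{itemize}
\item Sufficiency becomes self-contained: no ordered affinity, no $\gamma$, no external stability theorem.
\item You get the sharper constant $2$ in place of $4$ in \eqref{eq:rate}.
\item You get extra structure: $P^t_a$ increases and $P^t_b$ decreases toward $\mu^*$.
\end{itemize}
The cost is that you must import the Kamae--Krengel--O'Brien weak-closedness result for existence.

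What the paper's route buys: it is shorter given the imported machinery, and it keeps both directions inside one ordered-affinity framework. It also yields a contraction \eqref{eq:grate} that scales with the initial distance $\gamma(\mu,\mu^*)$, rather than only a uniform bound.
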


In \eqref{eq:rate},
$\epsilon$ is a fraction of the two groups whose ranking can be reversed over
$n$ periods, and it is this fraction that governs how quickly the distribution converges.

\section{Relationship to the MMC}\label{s:mmc}

In this section we relate the WMMC to the monotone mixing condition of
\cite{hopenhayn1992stochastic}, and show that the two coincide when the state
space is one-dimensional.

\subsection{Straddle Points}\label{ss:straddle}

To connect the WMMC with the mixing condition of
\cite{hopenhayn1992stochastic}, it is helpful to introduce the following
concept: $s \in \Xsf$ is called a \navy{straddle point} for $(\mu, \nu) \in \pP
\times \pP$ if $\mu([a,s])$ and $\nu([s,b])$ are both positive; that is, if $s$
lies above a positive fraction of $\mu$ and below a positive fraction of $\nu$.
\begin{figure}[t]
    \centering
    \begin{tikzpicture}[>=stealth]
        \fill[MidnightBlue!30]
            plot[domain=0:3.3, samples=60] (\x, {2.2*exp(-(\x-3.8)^2/1.3)})
            -- (3.3,0) -- (0,0) -- cycle;
        \fill[OrangeRed!35]
            plot[domain=3.3:6, samples=60] (\x, {2.2*exp(-(\x-2.2)^2/1.3)})
            -- (6,0) -- (3.3,0) -- cycle;
        \draw[thick, OrangeRed]
            plot[domain=0:6, samples=120] (\x, {2.2*exp(-(\x-2.2)^2/1.3)});
        \draw[thick, MidnightBlue]
            plot[domain=0:6, samples=120] (\x, {2.2*exp(-(\x-3.8)^2/1.3)});
        \draw[->] (-0.2,0) -- (6.4,0);
        \foreach \p/\l in {0/a, 3.3/s, 6/b}
            \draw (\p,0.08) -- (\p,-0.08) node[below] {$\l$};
        \draw[dashed] (3.3,0) -- (3.3,2.35);
        \node[OrangeRed] at (2.2,2.5) {$\nu$};
        \node[MidnightBlue] at (3.8,2.5) {$\mu$};
        \fill[MidnightBlue!30, draw=MidnightBlue] (7.0,1.55) rectangle (7.4,1.85);
        \node[anchor=west] at (7.5,1.7) {$\mu([a,s])$};
        \fill[OrangeRed!35, draw=OrangeRed] (7.0,0.95) rectangle (7.4,1.25);
        \node[anchor=west] at (7.5,1.1) {$\nu([s,b])$};
    \end{tikzpicture}
    \caption{A straddle point $s$ for $(\mu, \nu)$ when $\Xsf \subset \RR$.}
    \label{f:straddle}
\end{figure}

A straddle point provides an easy test for partial dominance:

\begin{lemma}\label{l:straddle}
    If $s \in \Xsf$ is a straddle point for $(\mu, \nu) \in \pP \times \pP$,
    then $\mu$ is partially dominated by $\nu$ at level
    $\min \{ \mu([a,s]), \, \nu([s,b]) \}$.
\end{lemma}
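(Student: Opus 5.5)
Set $\alpha \coloneq \mu([a,s])$, $\beta \coloneq \nu([s,b])$ and $\epsilon \coloneq \min\{\alpha,\beta\} > 0$. The plan is to write down $\mu'$ and $\nu'$ explicitly as scaled restrictions and then check the four conditions in \eqref{eq:pd}. Specifically, let
\begin{equation*}
    \mu'(B) \coloneq \frac{\epsilon}{\alpha}\, \mu(B \cap [a,s])
    \quad \text{and} \quad
    \nu'(B) \coloneq \frac{\epsilon}{\beta}\, \nu(B \cap [s,b])
    \qquad (B \in \bB).
\end{equation*}
Both intervals are Borel: since $\preceq$ is closed, $[a,s] = \setntn{x}{x \preceq s}$ and $[s,b]$ are closed sets, so these are finite Borel measures. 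Because $\epsilon/\alpha \leq 1$ and $\epsilon/\beta \leq 1$, we get $\mu' \leq \mu$ and $\nu' \leq \nu$. The masses are $\mu'(\Xsf) = \epsilon = \nu'(\Xsf)$.

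It remains to verify $\mu' \preqsd \nu'$, i.e.\ $\mu'(I) \leq \nu'(I)$ for each $I \in i\bB$. Fix an increasing Borel set $I$ and split into two cases.
\begin{itemize}
    \item If $s \in I$, then every $x \succeq s$ lies in $I$, so $[s,b] \subset I$. Hence $\nu'(I) = \nu'(\Xsf) = \epsilon \geq \mu'(I)$.
    \item If $s \notin I$, then no $x \preceq s$ lies in $I$, since otherwise $s \in I$ because $I$ is increasing. So $I \cap [a,s] = \emptyset$ and $\mu'(I) = 0 \leq \nu'(I)$.
\end{itemize}
In both cases $\mu'(I) \leq \nu'(I)$, which gives \eqref{eq:pd} at level $\epsilon$.

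There is no real obstacle. The only points needing care are measurability of the order intervals, which follows from closedness of $\preceq$, and the scaling: normalizing each restriction to common mass $\epsilon$ keeps them subpopulations precisely because $\epsilon$ is the minimum of the two masses.
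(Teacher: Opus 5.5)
Your proof is correct and follows essentially the same route as the paper: the same scaled restrictions of $\mu$ to $[a,s]$ and $\nu$ to $[s,b]$, and the same case split on whether $s \in I$. Your remark that the order intervals are closed (hence Borel) because $\preceq$ is closed is a small point the paper leaves implicit.
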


\cref{f:straddle} illustrates the idea when $\Xsf$ is an interval.  The
shaded regions are the part of $\mu$ lying at or below $s$ and the part of
$\nu$ lying at or above $s$, and both have positive mass because $s$ is a
straddle point.  Scaling the larger of the two down until its mass equals that
of the smaller yields subpopulations $\mu' \leq \mu$ and $\nu' \leq \nu$ of
equal size $\epsilon = \min \{ \mu([a,s]), \, \nu([s,b]) \}$.  Moreover,
$\mu' \preqsd \nu'$, since all of the mass of $\mu'$ lies at or below $s$,
while all of the mass of $\nu'$ lies at or above it.  The same argument works
for a general partial order; a proof is given in \cref{s:proofs}.

\subsection{The Monotone Mixing Condition}\label{ss:mmc}

The \navy{monotone mixing condition} (MMC) of \cite{hopenhayn1992stochastic}
requires that there exist an $n \in \NN$ and an $s \in \Xsf$ with
\begin{equation}\label{eq:mmc}
    P^n(a, [s,b]) > 0
    \quad \text{and} \quad
    P^n(b, [a,s]) > 0 .
\end{equation}
In the terminology just introduced, \eqref{eq:mmc} says that $s$ is a
straddle point for $(P^n_b, P^n_a)$.  By \cref{l:straddle}, $P^n_b$ is then
partially dominated by $P^n_a$, so the MMC implies the WMMC, and
\cref{t:main} recovers the global stability result of
\cite{hopenhayn1992stochastic}.  The two conditions differ in how the reversal
of rank is generated.  The MMC requires a straddle point:  at some common
horizon, a positive fraction of the descendants of $b$ sits at or below $s$,
while a positive fraction of the descendants of $a$ sits at or above it.
The WMMC retains the reversal but discards the straddle point.  The converse
implication fails, and this matters for economic applications with multiple
state variables, as shown in \cref{s:app}.

\subsection{The Univariate Case}

In the univariate case, the WMMC and the MMC are equivalent: every
instance of partial dominance can be certified by a straddle point (see
\cref{p:real} in \cref{s:proofs}).  As a result, \cref{t:main} converts the MMC into an
exact characterization of global stability.

\begin{theorem}\label{t:mmc}
    Let $\Xsf = [a,b] \subset \RR$ with the usual order and let $P$ be an
    increasing stochastic kernel on $(\Xsf, \bB)$.  Then $P$ is globally stable
    if and only if $P$ satisfies the \textup{MMC}.
\end{theorem}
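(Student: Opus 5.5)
The plan is to combine \cref{t:main} with a univariate converse to \cref{l:straddle}, namely that on $\Xsf = [a,b] \subset \RR$ every instance of partial dominance is certified by a straddle point. Given this, the MMC and the WMMC coincide, and \cref{t:main} yields the characterization.

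Sufficiency is immediate. If the MMC holds with $n$ and $s$, then $s$ is a straddle point for $(P^n_b, P^n_a)$. By \cref{l:straddle}, $P^n_b$ is partially dominated by $P^n_a$, so the WMMC holds, and \cref{t:main} gives global stability.

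For necessity, suppose $P$ is globally stable. By \cref{t:main} there are $n$ and $\epsilon>0$ with $\mu \coloneq P^n_b$ partially dominated by $\nu \coloneq P^n_a$ at level $\epsilon$. So there exist $\mu' \leq \mu$ and $\nu' \leq \nu$ of mass $\epsilon$ with $\mu' \preqsd \nu'$. We must produce $s$ with $\mu([a,s])>0$ and $\nu([s,b])>0$.

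To find $s$, let $s \coloneq \inf\{x : \mu'([a,x]) > 0\}$, the left end of the support of $\mu'$.
\begin{itemize}
\item The set $[a,x]$ is closed, and $x \mapsto \mu'([a,x])$ is right-continuous and nondecreasing. Since $\mu'([a,x])>0$ for all $x>s$, right-continuity gives $\mu'([a,s]) > 0$ (if $s=b$ this reads $\mu'(\Xsf)=\epsilon>0$). Hence $\mu([a,s]) \geq \mu'([a,s]) > 0$.
\item Next, $\mu'$ puts no mass on $[a,s)$, so $\mu'([s,b]) = \epsilon$.
\item The set $[s,b]$ is increasing, so stochastic dominance gives $\nu'([s,b]) \geq \mu'([s,b]) = \epsilon > 0$. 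Therefore $\nu([s,b]) > 0$.
\end{itemize}
Thus $s$ is a straddle point for $(P^n_b, P^n_a)$, which is exactly \eqref{eq:mmc}, and the MMC holds.

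The one delicate point is the choice of $s$: it must satisfy $\mu'([a,s])>0$ and $\mu'([s,b])=\epsilon$ at the same time. This is where total order and right-continuity of the distribution function are used, and the argument fails in multidimensional orders. Taking the infimum of the support handles the possibility of an atom at $s$ correctly, because both intervals are closed at $s$. The rest is bookkeeping with \cref{t:main} and \cref{l:straddle}.
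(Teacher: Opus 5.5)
\textbf{There is a genuine gap in your first bullet, though it is easy to repair.} Your strategy differs from the paper's. The paper proves the univariate converse (\cref{p:real}) by contraposition: it builds an increasing set $I$ with $\mu(I)=1$ and $\nu(I)=0$, then uses \eqref{eq:dual} to conclude $\alpha(\mu,\nu)=0$. You instead work directly with the matched subpopulations $\mu' \preqsd \nu'$, which is a sound idea.

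The problem is that right-continuity only gives $\mu'([a,s]) = \inf_{x>s}\mu'([a,x])$. This is a limit of positive numbers, and it can be zero. Since $\mu'([a,s))=0$, you have $\mu'([a,s])=\mu'(\{s\})$. So your argument works exactly when $\mu'$ has an atom at the left end of its support, and fails otherwise. This is the opposite of what your closing remark asserts. Concretely, on $\Xsf=[0,1]$ let $P(x,\cdot)$ be the uniform distribution for every $x$. This kernel is increasing and globally stable, and $\mu=\nu$ is uniform. With $\mu'=\nu'=\mu$ you get $s=0$ and $\mu([0,0])=0$, so your $s$ is not a straddle point.

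\textbf{Repair.} Cut at a positive quantile instead of at zero: set $s \coloneq \inf\{x \in [a,b] : \mu'([a,x]) \geq \epsilon/2\}$.
Right-continuity now gives $\mu'([a,s]) \geq \epsilon/2$. Since $\mu'([a,x]) < \epsilon/2$ for every $x<s$, continuity from below gives $\mu'([a,s)) \leq \epsilon/2$, and hence $\mu'([s,b]) \geq \epsilon/2$. Your third bullet then yields $\nu([s,b]) \geq \nu'([s,b]) \geq \mu'([s,b]) \geq \epsilon/2$, while $\mu([a,s]) \geq \mu'([a,s]) \geq \epsilon/2$.

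With this change your proof is correct. Your univariate step no longer needs the duality formula \eqref{eq:dual}. It is also quantitative: partial dominance at level $\epsilon$ produces a straddle point with both masses at least $\epsilon/2$. The paper's contrapositive argument gives only the qualitative existence of $s$.
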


\section{Application: A Small Open Economy Model}
\label{s:app}

We now apply \cref{t:main} to a small open economy version of the
\cite{aiyagari1994uninsured} model. The distinguishing feature is that the
interest rate and the wage are tied together by a decreasing factor-price
relation. As a result, the two factor prices move in opposite directions, which
breaks the MMC of \cite{hopenhayn1992stochastic}, while the WMMC still holds.

\subsection{Household Sector and Factor Prices}

Time is discrete and there is a unit continuum of households. 
The economy is small open, so it takes the world interest rate as
given. The net world interest rate $r_t$ is {\sc iid} with a continuous
distribution supported on $[\underline r,\bar r]$. A competitive
firm produces a single good with a constant-returns-to-scale technology
$F(K,L)$, where $K$ and $L$ are the demands for capital and labor, respectively. 
Assume that $F$ is twice continuously differentiable, nonnegative, and
strictly increasing in each input. Capital
depreciates at rate $d\in(0,1)$, and $-d<\underline r<\bar r<\infty$.
Let $f(k)=F(k,1)$ denote the intensive production function, with
$k=K/L$. Then $f'>0$, and we assume in addition that $f''<0$ and that $f'$
satisfies the Inada conditions $f'(0)=\infty$ and $f'(\infty)=0$. Profit
maximization gives
\begin{equation*}
	r_t+d=f'(k_t),
	\qquad
	w_t=f(k_t)-k_t f'(k_t).
\end{equation*}
Because $f''<0$, and since $\underline r + d > 0$, the first equation has a
unique solution $k_t=\psi(r_t) \in (0, \infty)$. Substituting into the
second equation yields the factor-price relation
\begin{equation*}
	w_t=W(r_t),
	\qquad
	W(r)=f(\psi(r))-\psi(r)\bigl(r+d\bigr).
\end{equation*}
Moreover, $W'(r)=-\psi(r)<0$, so the wage is a strictly decreasing function
of the interest rate. We set $\underline w=W(\bar r)$ and $\bar w=W(\underline r)$.
Note that $\underline w>0$, because $f(0) \geq 0$ and $f''<0$ imply
$f(k)-kf'(k)>0$ for all $k>0$.
Thus $(r_t,w_t)$ always lies on a decreasing curve
\begin{equation*}
	D \coloneq \left\{(r, W(r)): r \in [\underline r, \bar r]\right\}.
\end{equation*}
Each household has a period utility function
$u:\RR_+\to \RR$ that is twice continuously differentiable,
strictly increasing, strictly concave, and satisfies the Inada
conditions $u'(0)=\infty$ and $u'(\infty)=0$. The household aims to solve
\begin{align*}
	& \text{maximize}  
	&& \EE_0\sum_{t=0}^{\infty}\beta^t u(c_t) \\
	&\text{subject to}   
	&& c_t+k_{t+1}=w_t z_t+(1+r_t)k_t,
	\qquad c_t, k_{t+1}\ge 0.
\end{align*}
Here $\beta\in (0,1)$ is the discount factor, $k_t$ is individual wealth held at 
the beginning of period $t$, and $z_t$ is idiosyncratic productivity. The 
productivity shock has a continuous distribution supported on 
$[\underline z,\bar z]$, where $0<\underline z<\bar z<\infty$, and the pair
$(z_t, r_t)$ is {\sc iid} over time.

\subsection{State Space and Optimal Policy}
The state vector of a household is $x=(k,z,r,w)$. We impose the impatience
condition $\beta(1+\bar r)<1$ and assume that asset holdings are
restricted to $[0,\bar k]$, where the lower bound is the borrowing constraint
and $\bar k$ is a finite cap on savings, which we take to be large enough that
the cap never binds.\footnote{The 
	cap serves only to compactify the state space. Under impatience and 
	standard additional restrictions on preferences, optimal wealth 
	accumulation is bounded, so $\bar k$ can be chosen sufficiently large 
	that optimal wealth always lies in the interior of $[0,\bar k]$; 
	see, e.g., \citet[Proposition~4]{acikgoz2018existence} and
	\citet[Propositions~7--9]{zhu2020existence}.}
The state space is then
\begin{equation}\label{eq:ex_state_space}
	\Xsf \coloneq
	[0,\bar k]
	\times
	[\underline z,\bar z]
	\times
	[\underline r,\bar r]
	\times
	[\underline w,\bar w],
\end{equation}
endowed with the pointwise partial order, which is closed.  Under this order,
$\Xsf$ has least element $a=(0,\underline z,\underline r,\underline w)$ and
greatest element $b=(\bar k,\bar z,\bar r,\bar w)$.  Note that the price pair
$(r_t,w_t)$ takes values in the curve $D$ rather than the full rectangle
$[\underline r,\bar r] \times [\underline w,\bar w]$, but the rectangle is the
right state space:  since $W$ is strictly decreasing, distinct points of $D$
are unordered, so $D$ has no least or greatest element.  Nor can $w$ simply be
dropped from the state:  with state $(k, z, r)$ and $w = W(r)$, the savings
policy $g(k, z, r, W(r))$ need not be monotone in $r$, since a higher interest rate
raises interest income but lowers the wage.  The redundant coordinate $w$ is
what makes the dynamics increasing, and, as we show below, the failure of the
MMC is the price.  The same issue arises whenever two exogenous state
variables are tied together by a decreasing relation, so the phenomenon is
not special to this model.
The Bellman equation is
\begin{equation}
	V(k,z,r,w)
	=
	\max_{0\le k'\le\bar k}
	\left\{
	u\bigl(wz+(1+r)k-k'\bigr)
	+
	\beta \EE V\bigl(k',z',r',W(r')\bigr)
	\right\},
	\label{eq:bellman}
\end{equation}
where the expectation is over the {\sc iid} shocks $(z',r')$, and consumption
is also required to be nonnegative.  The Bellman
operator is a contraction on $bc\Xsf$, so \eqref{eq:bellman} has a unique
solution $V \in bc\Xsf$.  Since $u$ is concave and the
constraint set is convex, $V$ is concave in $k$, and strict concavity of $u$
then implies that the maximizer is unique.  The resulting savings policy
$k'=g(k,z,r,w)$ takes values in $[0,\bar k]$, so $\Xsf$ is a compact invariant
state space.  By our choice of $\bar k$, we have $g(x) < \bar k$ for all $x \in
\Xsf$.

\subsection{Monotonicity, Mixing, and Global Stability}

Let $P$ be the stochastic kernel induced by the optimal policy. Given
$x=(k,z,r,w)$, the next-period state is
\begin{equation*}
	x'=\bigl(g(k,z,r,w),\,z',\,r',\,W(r')\bigr),
\end{equation*}
where $(z',r')$ is drawn from the exogenous continuous law.
Since $u'' < 0$, the objective in \eqref{eq:bellman} has increasing differences
in $k'$ and cash-on-hand $wz + (1+r)k$, while the constraint set is increasing
in the state.  Hence, by Topkis's theorem, the policy $g$ is increasing in all
four arguments. Since the next-period shocks have the
same law from every state, $P$ is an increasing stochastic kernel.

We now verify that the MMC fails.  Fix $n\ge 1$ and a candidate straddle point
$s=(s_k,s_z,s_r,s_w)\in \Xsf$, and consider the sets
\begin{equation*}
	A \coloneq D\cap\{(r,w):(r,w)\le(s_r,s_w)\}
	\quad \text{and} \quad
	B \coloneq D\cap\{(r,w):(r,w)\ge(s_r,s_w)\}.
\end{equation*}
If $(r,w)\in A$, then $r\le s_r$ and $W(r)\le s_w$; since $W$ is decreasing,
the first inequality gives $W(r)\ge W(s_r)$, so $A$ is nonempty only if
$s_w\ge W(s_r)$.  A symmetric argument shows that $B$ is nonempty only if
$s_w\le W(s_r)$.  Hence, if both sets are nonempty, then $s_w=W(s_r)$ and,
because $W$ is strictly decreasing, $A=B=\{(s_r,W(s_r))\}$.  At the same time,
the $(r,w)$-marginal of $P^n_x$ is the law of $(r,W(r))$,
which is supported on $D$ and atomless.  Since $P^n(b,[a,s])>0$ requires
$(r_n,w_n)\in A$ with positive probability, and $P^n(a,[s,b])>0$ requires
$(r_n,w_n)\in B$ with positive probability, these two conditions cannot both
hold.  Thus the MMC fails at every $n$ and every $s$.

We now show that the WMMC holds.  The first step is to show that the
borrowing constraint binds with positive probability.

\begin{lemma}\label{l:binding}
    There exist $n \in \NN$ and $p > 0$ such that
    $\PP\{k_n = 0 \given x_0 = b\} \geq p$.
\end{lemma}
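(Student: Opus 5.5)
We will show that a household starting from $b$ and repeatedly hit by bad draws runs its wealth down to zero in finitely many periods. The argument rests on two facts: the Euler equation with impatience, and the fact that the policy $g$ is increasing, so it suffices to control a worst-case path. Let $\underline x(k) \coloneq (k, \underline z, \bar r, \underline w)$, and note that $(\bar r, \underline w) = (\bar r, W(\bar r)) \in D$. Take low productivity $z$ near $\underline z$ and high interest rate $r$ near $\bar r$. Since $W$ is continuous, the wage is then near $\underline w$. Because $r$ and $z$ have continuous distributions with supports $[\underline r,\bar r]$ and $[\underline z,\bar z]$, and $(z,r)$ has continuous law on the product of supports, the event $E_\eta \coloneq \{z' \leq \underline z + \eta,\ r' \geq \bar r - \eta\}$ has positive probability $q_\eta>0$ for each $\eta>0$. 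If independence of $z$ and $r$ is not given, we use only that the joint support contains the corner.

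\textbf{Step 1: wealth falls by a uniform amount when the constraint does not bind.} Define $h(k) \coloneq \sup\{ g(k, z, r, W(r)) : z \leq \underline z + \eta,\ r\geq \bar r-\eta\}$. We aim to show that there is $\delta>0$ such that $h(k) \leq \max\{k-\delta, 0\}$ for all $k\in[0,\bar k]$, for $\eta$ small. We argue by contradiction, using the standard Aiyagari logic. Suppose $g(x)=k'>0$ interior with $k'\geq k$. The Euler inequality gives $u'(c) \leq \beta(1+\bar r)\EE u'(c') < \EE u'(c')$. Concavity of $V$, together with monotonicity of consumption in cash-on-hand, then gives a contradiction, because next-period cash-on-hand is at least current cash-on-hand when the current state is the worst state. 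Precisely: at the worst state $\underline x(k)$, current income is the lowest possible given $k$. Hence $\EE u'(c(k',\cdot)) \leq u'(c(\underline x(k')))$ is not automatic, and the cleaner route is a compactness argument. The map $k\mapsto g(\underline x(k)) - k$ is continuous, since $g$ is continuous by Berge's theorem. We show it is negative on $(0,\bar k]$, which yields a uniform $\delta$ on $[\delta_0,\bar k]$ by compactness. Below $\delta_0$ we exploit that $g(\underline x(k))=0$ for $k$ small. This last fact is the binding of the constraint at low wealth and low income, and we derive it from the Euler equation and $\beta(1+\bar r)<1$ as in the standard proof that the borrowing constraint binds at the bottom, e.g.\ \citet{acikgoz2018existence}. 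Continuity of $g$ in $(z,r)$ then extends the bound from $\underline x(k)$ to the $\eta$-neighbourhood.

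\textbf{Step 2: iterate.} Set $n \coloneq \lceil \bar k/\delta\rceil + 1$. On the event that $E_\eta$ occurs in each of periods $1,\dots,n$, the wealth path starting from $k_0=\bar k$ satisfies $k_{t+1} \leq \max\{k_t-\delta,0\}$ from period 1 on. This uses monotonicity of $g$ in $k$ together with the Step 1 bound; the first transition from $b$ uses only $g\leq \bar k$. Hence $k_n=0$. Since $(z_t,r_t)$ is iid, this event has probability $q_\eta^{\,n}\eqqcolon p>0$.

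The main obstacle is Step 1: establishing strict decumulation at the worst shock for every $k>0$, including that the constraint binds at $k$ near zero. I would first try the Euler-equation contradiction under the assumption that consumption $c(k,z,r,w)$ is increasing in the state. That assumption is obtained via concavity of $V$ and Topkis. It gives $\EE u'(c')\leq u'(c(k',\underline z,\underline r,\dots))$, which is not yet enough, and this gap is where the careful argument lies. If the direct route stalls, the fallback is to invoke the known boundedness and binding results cited in the footnote.
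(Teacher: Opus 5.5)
\textbf{There is a genuine gap.} Step 1 carries the whole content of the lemma, and it is not proved. Your compactness argument only turns a pointwise strict inequality $g(\underline x(k)) < k$ on $(0,\bar k]$ into a uniform $\delta$. The pointwise inequality itself is never established, as you acknowledge.

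Moreover, the Euler contradiction you have in mind cannot deliver it at the fixed shock $(\underline z,\bar r)$. That contradiction works only when the current shock minimizes cash-on-hand $m = W(r)z + (1+r)k$ given $k$. In that case $k' \geq k$ forces $m' \geq W(r')z' + (1+r')k \geq m$, hence $c' \geq c$ and $u'(c) \leq \beta(1+\bar r)u'(c)$. But $\partial_r [W(r)\underline z + (1+r)k] = k - \psi(r)\underline z$, so $\bar r$ minimizes cash-on-hand only when $k \leq \psi(\bar r)\underline z$. For larger $k$, a high interest rate \emph{raises} cash-on-hand, because capital income outweighs the lower wage. Next period's cash-on-hand can then fall below the current one even if $k' \geq k$, and the one-step comparison breaks down. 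It is not clear that strict decumulation at $(\underline z,\bar r)$ holds at all wealth levels.

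Separately, Step 2 needs the joint law of $(z,r)$ to charge every neighbourhood of the corner $(\underline z,\bar r)$. The model does not assume this: only the marginals are specified, and if $z$ and $r$ were comonotone the corner would lie outside the support.

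\textbf{The missing idea is that no worst-case path is needed.} The paper argues by contradiction. Suppose $\PP\{k_n = 0 \mid x_0 = b\} = 0$ for every $n$. Then almost surely $0 < k_{t+1} < \bar k$ at every date, so the Euler equation holds with equality along the entire path from $b$. There is a uniform consumption floor $\underline c \coloneq \min_{\Xsf} c > 0$, which follows from continuity of $c$, $wz \geq \underline w\,\underline z > 0$, the Inada condition, and compactness. Iterating the Euler equation $n$ times and using this floor gives $u'(c_0) \leq (\beta(1+\bar r))^n u'(\underline c) \to 0$, contradicting $u' > 0$. This uses impatience over many periods rather than a one-step comparison at a particular state, and it needs nothing about the joint distribution of shocks.

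If you want to keep your constructive route, the bad shock must depend on $k$. You would choose a cash-on-hand minimizer over the joint support of $(z,r)$, which makes the one-step Euler contradiction valid. You would then need a uniform positive probability for neighbourhoods of these minimizers, obtained by compactness and lower semicontinuity of ball probabilities. That is considerably more work than the paper's argument.
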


\begin{proof}
    Let $c(x) \coloneq wz + (1+r)k - g(x)$ be the consumption policy.  It is
    continuous by the theorem of the maximum.  Moreover, $wz \geq \underline w
    \, \underline z > 0$ at every $x \in \Xsf$, so the Inada condition implies
    $c(x) > 0$; since $\Xsf$ is compact, $\underline c \coloneq \min_{x \in
    \Xsf} c(x) > 0$.
    Suppose, contrary to the claim, that $\PP\{k_n = 0 \given x_0 = b\} =
    0$ for every $n \in \NN$.  Then, starting from $b$, we have $0 < k_{t+1}
    < \bar k$ almost surely at every date, so the Euler equation holds with
    equality:  $u'(c_t) = \beta \, \EE_t [(1 + r_{t+1}) \, u'(c_{t+1})]$.
    Iterating gives
    \begin{equation*}
        u'(c_0)
        = \EE_0 \left[ \prod_{i=1}^n \beta (1 + r_i) \, u'(c_n) \right]
        \leq (\beta (1 + \bar r))^n \, u'(\underline c)
        \qquad (n \in \NN) .
    \end{equation*}
    Since $\beta(1 + \bar r) < 1$, letting $n \to \infty$ gives $u'(c_0)
    \leq 0$, a contradiction.
\end{proof}

In words, \cref{l:binding} states that a household that begins with maximal
wealth and productivity exhausts its assets within $n$ periods with positive
probability.  This is a standard scenario in Aiyagari-style models with
impatient households:  a sufficiently long run of unfavorable shocks drives
wealth down to the borrowing constraint. See, e.g.,
\citet[Lemma~F.2]{acikgoz2018existence}, \citet[Proposition~11]{zhu2020existence}, and \citet[Lemma~C.2]{ma2020income}.

Since the shocks $(z_t, r_t)$ are {\sc iid}, the time-$n$ state consists of
$k_n$, which depends only on $x_0$ and the shocks up to date $n-1$, and $(z_n,
r_n, w_n)$, which is independent of both.  Hence
\begin{equation}\label{eq:prod}
	P^n_x = \lambda^x_n \otimes \eta
	\qquad (x \in \Xsf),
\end{equation}
where $\otimes$ denotes the product of measures, $\lambda^x_n$ is the
distribution of $k_n$ given $x_0 = x$, and $\eta$ is the common law of $(z, r,
W(r))$.  Since $\delta_a \preqsd \delta_b$ and $P$
is increasing, we have $P^n_a \preqsd P^n_b$.  The set $\setntn{x \in \Xsf}{k
> 0}$ is increasing, so it receives no more mass under $P^n_a$ than under
$P^n_b$.  Taking complements,
\begin{equation*}
	\lambda^a_n(\{0\})
	\geq \lambda^b_n(\{0\})
	\geq p ,
\end{equation*}
where the second inequality holds by \cref{l:binding}.  Now let $\mu' = \nu' \coloneq p
\, (\delta_0 \otimes \eta)$, where $\delta_0$ is the unit mass at $k = 0$.  By
\eqref{eq:prod} and the previous display, $\mu' \leq P^n_b$ and $\nu' \leq
P^n_a$, while $\mu' \preqsd \nu'$ holds trivially because the two measures are
identical.  Hence $P^n_b$ is partially dominated by $P^n_a$ at level $p > 0$.
Thus $P$ satisfies the WMMC and, by
\cref{t:main}, is globally stable.  Moreover, the rate bound \eqref{eq:rate}
holds with this $n$ and $\epsilon = p$.

In summary, the descendants of both dynasties reach the borrowing constraint
simultaneously with positive probability, while facing the same distribution
over prices, and this overlap is enough for the WMMC:  the reversal it requires
is weak, so coalescence suffices.  The MMC fails
nonetheless, since it insists that the overlap be certified by a single
straddle point $s$, and no such point exists:  prices always lie on the
decreasing curve $D$, along which no two distinct points are comparable.

\section{Proofs}\label{s:proofs}

This section collects the proofs.  We begin with the straddle point lemma.

\begin{proof}[Proof of \cref{l:straddle}]
    Suppose that $s \in \Xsf$ is a straddle point for $(\mu, \nu)$.
    Let $p \coloneq \mu([a,s])$, $q \coloneq \nu([s,b])$, and $\epsilon \coloneq
    \min\{p, q\}$. Define $\mu'$ and $\nu'$ by
    \begin{equation}\label{eq:mpnp}
        \mu'(B) \coloneq \frac{\epsilon}{p} \, \mu(B \cap [a,s])
        \quad \text{and} \quad
        \nu'(B) \coloneq \frac{\epsilon}{q} \, \nu(B \cap [s,b])
        \qquad (B \in \bB).
    \end{equation}
    One easily confirms that $\mu' \leq \mu$, $\nu' \leq \nu$ and
    $\mu'(\Xsf) = \nu'(\Xsf) = \epsilon$.  To see that $\mu' \preqsd \nu'$,
    fix $I \in i\bB$.  If $s \in I$, then $[s,b] \subseteq I$ because $I$ is
    increasing, so $\nu'(I) = \epsilon$, while $\mu'(I) \leq \epsilon$.  If $s \notin I$,
    then $I \cap [a,s] = \emptyset$, since $y \in I$ and $y \preceq s$ would
    force $s \in I$, so $\mu'(I) = 0$.  In both cases
    $\mu'(I)  \leq \nu'(I)$.  The claim follows.
\end{proof}

The key tool in the proofs of \cref{t:main,t:mmc} is the ordered affinity of
\cite{kamihigashi2019unified}, which measures the extent to which one
distribution is partially dominated by another.  The \navy{ordered affinity}
of $(\mu, \nu) \in \pP \times \pP$ is the largest level at which partial
dominance holds:
\begin{equation}\label{eq:alf}
    \alpha(\mu, \nu)
    \coloneq \max
    \setntn{\epsilon \in [0,1]}
    {\text{$\mu$ is partially dominated by $\nu$ at level $\epsilon$}} .
\end{equation}
The maximum is attained \citep[Proposition~3.1]{kamihigashi2019unified}.
(At $\epsilon = 0$, \eqref{eq:pd} holds trivially with $\mu' = \nu' = 0$, so the
set in \eqref{eq:alf} is nonempty.)
In this language, the WMMC of \cref{s:main} states that
$\alpha(P^n_b, P^n_a) > 0$ for at least one $n \in \NN$, and, by attainment,
the sharpest version of the rate bound \eqref{eq:rate} at horizon $n$ sets
$\epsilon = \alpha(P^n_b, P^n_a)$.

We make frequent use of the dual expression
\begin{equation}\label{eq:dual}
    \alpha(\mu, \nu)
    = 1 - \sup_{I \in i\bB} \, \left( \mu(I) - \nu(I) \right)
    \qquad (\mu, \nu \in \pP) ,
\end{equation}
which is part of Theorem~3.1 of \cite{kamihigashi2019unified}.  One
immediate consequence is the monotonicity property
\begin{equation}\label{eq:amon}
    \hat \mu \preqsd \mu
    \;\; \text{and} \;\;
    \nu \preqsd \hat \nu
    \quad \implies \quad
    \alpha(\mu, \nu) \leq \alpha(\hat \mu, \hat \nu) ,
\end{equation}
valid for all $\mu, \nu, \hat \mu, \hat \nu \in \pP$.  Indeed, the
hypotheses of \eqref{eq:amon} give $\hat \mu(I) - \hat \nu(I) \leq \mu(I) -
\nu(I)$ for every $I \in i\bB$, and taking suprema in \eqref{eq:dual} yields
the conclusion.

Finally, we make use of the \navy{total ordered variation} distance between
$\mu$ and $\nu$, which is
\begin{equation}\label{eq:tov}
    \gamma(\mu, \nu) \coloneq 2 - \alpha(\mu, \nu) - \alpha(\nu, \mu) .
\end{equation}
That $\gamma$ is a metric on $\pP$ is Lemma~4.1 of
\cite{kamihigashi2019unified}.  Moreover, $\gamma$ is equivalent to $\kappa$.
Indeed, \eqref{eq:dual} gives
$\gamma(\mu, \nu) = \sup_{I \in i\bB} ( \mu(I) - \nu(I) ) +
\sup_{I \in i\bB} ( \nu(I) - \mu(I) )$, where both suprema are
nonnegative because $\emptyset \in i\bB$, while \eqref{eq:kol} says that
$\kappa(\mu, \nu)$ is twice the maximum of the same two suprema.  Hence
\begin{equation}\label{eq:gk}
    \gamma(\mu, \nu) \leq \kappa(\mu, \nu) \leq 2 \gamma(\mu, \nu)
    \qquad (\mu, \nu \in \pP) .
\end{equation}

We can now prove the main theorem.

\begin{proof}[Proof of \cref{t:main}]
    Suppose first that $P$ is globally stable, and let $\mu^*$ be its
    stationary distribution.  Since $P^t_b = \delta_b P^t$ and $P^t_a = \delta_a P^t$, global
    stability gives
    \begin{equation*}
        \kappa(P^t_b, P^t_a)
        \leq \kappa(P^t_b, \mu^*) + \kappa(\mu^*, P^t_a)
        \to 0
        \quad (t \to \infty) .
    \end{equation*}
    Since $\alpha \leq 1$, definition \eqref{eq:tov} gives $1 - \alpha(P^t_b,
    P^t_a) \leq \gamma(P^t_b, P^t_a)$, and $\gamma \leq \kappa$ by
    \eqref{eq:gk}.  Hence
    \begin{equation}\label{eq:atone}
        \alpha(P^t_b, P^t_a) \to 1
        \quad (t \to \infty) ,
    \end{equation}
    and in particular $P$ satisfies the WMMC.

    Suppose now that $P$ satisfies the WMMC, and fix $n \in \NN$ such that $P^n_b$ is
    partially dominated by $P^n_a$.  By \eqref{eq:alf}, this means that
    $\alpha_n \coloneq \alpha(P^n_b, P^n_a) > 0$.  We claim
    that
    \begin{equation}\label{eq:corner}
        \alpha(P^n_x, P^n_{x'}) \geq \alpha_n
        \qquad (x, x' \in \Xsf) .
    \end{equation}
    To see this, fix $x, x' \in \Xsf$.  Since $x \preceq b$ and $a \preceq x'$,
    we have $\delta_x \preqsd \delta_b$ and $\delta_a \preqsd \delta_{x'}$.  As
    $P^n$ is increasing, it follows that $\delta_x P^n \preqsd \delta_b P^n$ and
    $\delta_a P^n \preqsd \delta_{x'} P^n$, so \eqref{eq:amon} gives
    \begin{equation*}
        \alpha_n
        = \alpha(\delta_b P^n, \delta_a P^n)
        \leq \alpha(\delta_x P^n, \delta_{x'} P^n)
        = \alpha(P^n_x, P^n_{x'}) ,
    \end{equation*}
    which is \eqref{eq:corner}.  Since equality holds at $(x, x') = (b, a)$, the
    infimum of $\alpha(P^n_x, P^n_{x'})$ over $\Xsf \times \Xsf$ is exactly
    $\alpha_n > 0$.  This is the hypothesis of Corollary~5.1 of
    \cite{kamihigashi2019unified}, which therefore applies:  the kernel $P$ has a
    unique stationary distribution $\mu^* \in \pP$, and
    \begin{equation}\label{eq:grate}
        \gamma(\mu P^t, \mu^*)
        \leq (1 - \alpha_n)^{\lfloor t/n \rfloor} \gamma(\mu, \mu^*)
        \qquad (\mu \in \pP, \; t \geq 0) .
    \end{equation}
    In particular, by \eqref{eq:gk}, $\kappa(\mu P^t, \mu^*) \to 0$ for every
    $\mu \in \pP$, so $P$ is globally stable.

    Turning to the rate bound, let $n \in \NN$ and $\epsilon > 0$ be such
    that $P^n_b$ is partially dominated by $P^n_a$ at level $\epsilon$.  Then $\alpha_n \coloneq
    \alpha(P^n_b, P^n_a) \geq \epsilon > 0$ by \eqref{eq:alf}, so the argument
    leading to \eqref{eq:grate} applies at this $n$.  Combining
    \eqref{eq:grate} with $\alpha_n \geq \epsilon$ and \eqref{eq:gk} gives
    \begin{equation*}
        \kappa(\mu P^t, \mu^*)
        \leq 2 \gamma(\mu P^t, \mu^*)
        \leq 2 (1 - \epsilon)^{\lfloor t/n \rfloor} \gamma(\mu, \mu^*)
        \leq 4 (1 - \epsilon)^{\lfloor t/n \rfloor}
    \end{equation*}
    for all $\mu \in \pP$ and $t \geq 0$, where the last step uses
    $\gamma(\mu, \mu^*) \leq \kappa(\mu, \mu^*) \leq 2$.  This is
    \eqref{eq:rate}.
\end{proof}

Now we turn to the univariate case.
The proof of \cref{t:mmc} rests on the following result. 

\begin{proposition}\label{p:real}
    If $\Xsf = [a,b] \subset \RR$ with the usual order, then, given $\mu, \nu
    \in \pP$, the distribution $\mu$ is partially dominated by $\nu$ if and
    only if there is an $s \in \Xsf$ with $\mu([a,s]) > 0$ and
    $\nu([s,b]) > 0$.
\end{proposition}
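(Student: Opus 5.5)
The "if" direction is immediate from \cref{l:straddle}. If $s$ satisfies $\mu([a,s])>0$ and $\nu([s,b])>0$, then $s$ is a straddle point for $(\mu,\nu)$. Hence $\mu$ is partially dominated by $\nu$ at level $\min\{\mu([a,s]),\nu([s,b])\}>0$. All the work is in the converse, which I will prove by contraposition.

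Suppose no straddle point exists. Let $m \coloneq \inf \operatorname{supp}\mu$, the left endpoint of the support of $\mu$. Then $\mu([a,x])=0$ for $x<m$. Also $\mu([a,s])>0$ for every $s>m$, and $\mu([a,m])=\mu(\{m\})$.

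The idea is to show that $\nu$ lives strictly below (or weakly below, with atoms handled) every point where $\mu$ has mass. First, for every $s>m$ we have $\mu([a,s])>0$, so the no-straddle hypothesis forces $\nu([s,b])=0$. Taking the union over $s\downarrow m$ gives $\nu((m,b])=0$.

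Next split on whether $\mu$ has an atom at $m$. If $\mu(\{m\})>0$, then $s=m$ is not a straddle point, so $\nu([m,b])=0$. Thus $\nu$ is concentrated on $[a,m)$, while $\mu$ is concentrated on $[m,b]$. If $\mu(\{m\})=0$, then $\mu$ is concentrated on $(m,b]$ and $\nu$ on $[a,m]$. In either case there is a threshold $c=m$ and a partition of $\Xsf$ into a lower interval $L$ and an upper interval $U=\Xsf\setminus L$ with $\nu(U)=0$, $\mu(L)=0$, and with $L$ (or at least the relevant part) never meeting a point where both measures are positive. In the second case the only possible common point is $m$, which is not charged by $\mu$.

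To rule out partial dominance I will use the dual formula \eqref{eq:dual} with a single increasing set. Take $I=(m,b]$ in the atomless case and $I=[m,b]$ in the atom case. In both cases $I$ is increasing, $\mu(I)=1$ and $\nu(I)=0$. Then $\sup_{I\in i\bB}(\mu(I)-\nu(I))\ge 1$, so $\alpha(\mu,\nu)=0$. Since $\alpha$ is the largest level at which partial dominance holds, $\mu$ is not partially dominated by $\nu$ at any positive level.

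Alternatively, one can argue directly from \eqref{eq:pd} without the dual formula. Any $\mu'\le\mu$ charges only $I$, and any $\nu'\le\nu$ charges only $I^c$. Therefore $\mu'(I)=\epsilon>0=\nu'(I)$, which contradicts $\mu'\preqsd\nu'$.

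The main obstacle is the boundary bookkeeping at $m$: choosing the open versus closed threshold set correctly according to whether $\mu$ has an atom at $m$. That choice is exactly where the weak inequality in the straddle definition, $[a,s]$ and $[s,b]$ both containing $s$, matters.
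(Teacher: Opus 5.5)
Your proposal is correct and follows the paper's proof essentially step for step. Your $m$, the left endpoint of the support of $\mu$, is the paper's threshold $\tau$. Like the paper, you choose $I=[m,b]$ or $I=(m,b]$ according to whether $\mu$ has an atom at $m$, and you conclude $\alpha(\mu,\nu)=0$ from the dual formula \eqref{eq:dual}. Your alternative argument directly from \eqref{eq:pd} is a valid minor variant that avoids the duality result.
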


\begin{proof}
    Sufficiency is immediate from \cref{l:straddle}.  For necessity we argue by
    contraposition.  Assume that, for every $s \in [a,b]$, either $\mu([a,s]) =
    0$ or $\nu([s,b]) = 0$.  Set
    \begin{equation*}
        \tau \coloneq \sup \left( \{a\} \cup
        \setntn{r \in [a,b]}{\mu([a,r]) = 0} \right) .
    \end{equation*}
    Since $r \mapsto \mu([a,r])$ is nondecreasing, the set in question is an
    interval containing its smaller points, so $\mu([a,r]) = 0$ for every $r < \tau$
    and $\mu([a,r]) > 0$ for every $r > \tau$.  In particular $\mu([a,\tau)) = 0$.
    Define $I$ as $[\tau, b]$ if $\mu(\{\tau\}) > 0$ and $(\tau, b]$ otherwise.
    In either case $I$ is an increasing Borel subset of $\Xsf$ with $\mu(I) = 1$.
    We claim that $\nu(I) = 0$.  Given the claim, $\mu(I) - \nu(I) = 1$, so the
    supremum in \eqref{eq:dual} equals one and $\alpha(\mu, \nu) = 0$.  By
    \eqref{eq:alf}, $\mu$ is then not partially dominated by $\nu$ at any
    positive level, and the proof is complete.

    Suppose first that $\mu(\{\tau\}) > 0$.  Then $\mu([a,\tau]) > 0$, so $\nu([\tau,b])
    = 0$ by hypothesis, and $I = [\tau,b]$. Suppose instead that $\mu(\{\tau\}) = 0$.
    Then $\mu([a,\tau]) = 0$, and since $\mu([a,b]) = 1$ this forces $\tau < b$.  For
    every $s \in (\tau, b]$ we have $\mu([a,s]) > 0$ and hence $\nu([s,b]) = 0$.
    Letting $s$ decrease to $\tau$ along $(\tau,b]$ gives $\nu(I) = \nu((\tau,b]) = 0$.
\end{proof}

\begin{proof}[Proof of \cref{t:mmc}]
    If $P$ satisfies the MMC, then it satisfies the WMMC by \cref{l:straddle}
    and is therefore globally stable by \cref{t:main}.  Conversely, suppose that $P$ is globally stable.  By
    \cref{t:main} there is an $n \in \NN$ such that $P^n_b$ is partially
    dominated by $P^n_a$, and \cref{p:real}, applied with $\mu =
    P^n_b$ and $\nu = P^n_a$, supplies an $s \in \Xsf$ with $P^n(b, [a,s]) > 0$
    and $P^n(a, [s,b]) > 0$.  This is \eqref{eq:mmc}.
\end{proof}

\bibliographystyle{ecta}

\bibliography{localbib}

\end{document}